\documentclass[reqno,final,a4paper]{amsart}
\usepackage{color}
\usepackage{amsmath, amssymb, amsthm, thmtools}
\usepackage[colorlinks=true,allcolors=blue
]{hyperref}
\usepackage{mathrsfs}
\usepackage{mathtools}
\usepackage[linesnumbered,ruled,vlined]{algorithm2e}

\SetCommentSty{mycommfont}
\SetKwInOut{Input}{input}\SetKwInOut{Output}{output}

\usepackage[noadjust]{cite}
\usepackage[noabbrev,capitalize,nameinlink]{cleveref}
\crefname{equation}{}{}
\usepackage{fullpage}
\usepackage{graphics}
\usepackage{tikzscale}
\usepackage{pifont}
\usepackage{tikz}
\usetikzlibrary{fit, backgrounds, shapes, calc}
\usepackage{pgfplots}
\pgfplotsset{compat=1.12}
\usepackage{bbm}
\usepackage[T1]{fontenc}
\usetikzlibrary{arrows.meta}

\usepackage{environ}
\usepackage{framed}
\usepackage{url}

\usepackage[noend]{algpseudocode}
\usepackage[labelfont=bf]{caption}
\usepackage{framed}
\usepackage[framemethod=tikz]{mdframed}
\usepackage{appendix}
\usepackage{graphicx}
\usepackage[textsize=tiny]{todonotes}
\usepackage{tcolorbox}
\usepackage{xcolor}
\usepackage[shortlabels]{enumitem}
\crefformat{enumi}{#2#1#3}
\crefrangeformat{enumi}{#3#1#4 to~#5#2#6}
\crefmultiformat{enumi}{#2#1#3}
{ and~#2#1#3}{, #2#1#3}{ and~#2#1#3}
\allowdisplaybreaks

\usepackage{ifthen}
\numberwithin{equation}{section}
\newtheorem{theorem}{Theorem}[section]

\newtheorem{lemma}[theorem]{Lemma}
\newtheorem{claim}[theorem]{Claim}
\crefname{claim}{Claim}{Claims}

\newtheorem*{question*}{Question}

\theoremstyle{definition}

\newtheorem{problem}[theorem]{Problem}

\newtheorem*{definition*}{Definition}

\newtheorem*{fact*}{Fact}

\crefname{fact}{Fact}{Facts}

\theoremstyle{remark}

\renewcommand{\Pr}{\mathbb{P}}

\newcommand{\eps}{\varepsilon}

\def\epsilon{\varepsilon}

\newenvironment{proofclaim}[1][Proof of claim]{\begin{proof}[#1]}{\end{proof}}

\let\originalleft\left
\let\originalright\right
\renewcommand{\left}{\mathopen{}\mathclose\bgroup\originalleft}
\renewcommand{\right}{\aftergroup\egroup\originalright}

\title{On the number of distinct spanning trees in pseudorandom graphs}

\author{Yiting Wang}\address{Institute of Science and Technology Austria,~Klosterneuburg,~3400,~Austria.}\email{yiting.wang@ist.ac.at}

\thanks{Yiting Wang is supported by the European Research Council (ERC), via grant agreements ``RANDSTRUCT'' No.\ 101076777.}

\date{\today}

\begin{document}

\begin{abstract}
A celebrated result of Otter says the number of distinct unlabelled spanning trees in $K_n$ is $\alpha^n$ up to subexponential factors for an absolute constant $\alpha>0$.
In this note, we prove that for every $0<\varepsilon<\alpha$, there are constants $C$ and $d_0$ such that every $(n,d,\lambda)$-graph with $d\geq d_0$ and $d/\lambda \geq C$ has at least $(\alpha-\eps)^n$ distinct unlabelled spanning trees.
\end{abstract}
\maketitle

\section{Introduction}
How many labelled spanning trees are there in a graph $G$? For a complete graph on $n$ vertices $K_n$, the answer is given by the classical Cayley's formula~\cite{Cayley}. Let $T(G)$ denote the set of labelled spanning trees in $G$. Cayley's formula states that $|T(K_n)| = n^{n-2}$. What about the number of unlabelled spanning trees? 
We say two unlabelled spanning trees $T$ and $T'$ are \emph{isomorphic}, denoted as $T\cong T'$, if there exists a bijection $\sigma:V(T)\rightarrow V(T')$ such that $\{u,v\}\in E(T)$ if and only if $\{\sigma(u), \sigma(v)\}\in E(T')$. 
Two unlabelled spanning trees are \emph{distinct} if they are not isomorphic.
Let $\mathcal T^u(G)$ denote the set of (distinct) unlabelled spanning trees in $G$. Because the size of the automorphism group can vary vastly for different unlabelled spanning trees, this is a much more difficult question. Remarkably, Otter~\cite{Otter} proved the following sharp estimate on $|T^u(K_n)|$:
\begin{theorem}[Otter~\cite{Otter}]\label{thm:Otter}
    There exist absolute constants $C \approx 0.535$ and $\alpha \approx 2.956$ such that 
    \[
        |\mathcal T^u(K_n)| = (1+o(1))Cn^{-5/2} \alpha^n\,.
    \]
\end{theorem}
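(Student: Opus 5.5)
We will prove Otter's estimate by analytic combinatorics: first count rooted unlabelled trees, then pass to unrooted ones with Otter's dissimilarity identity, and finally transfer a square-root singularity to coefficient asymptotics.

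\textbf{Rooted trees.} Let $t_n$ be the number of unlabelled rooted trees on $n$ vertices and $R(x)=\sum_{n\ge1} t_n x^n$. Removing the root leaves a multiset of rooted trees, so Pólya's enumeration gives
\[
R(x)=x\exp\Bigl(\sum_{k\ge1}\frac{R(x^k)}{k}\Bigr).
\]
We will show that the radius of convergence $\rho=1/\alpha$ lies strictly between $0$ and $1$. For the lower bound, $t_n$ is at most the number of plane trees, which is at most $4^n$. For the upper bound, $t_n$ is at least the number of rooted trees built from caterpillar-like choices, which grows exponentially. Because $\rho<1$, for $|x|\le\rho+\delta$ the tail $\sum_{k\ge2}R(x^k)/k$ is analytic; call it $A(x)$. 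Then $R$ solves $F(x,R)=0$ with
\[
F(x,y)=y-x e^{y}e^{A(x)}.
\]
At the singularity we have $F=0$ and $F_y=0$, which forces $R(\rho)=1$ together with $\rho e^{1+A(\rho)}=1$. Finiteness of $R(\rho)$ follows by monotonicity, since $y\le xe^{y}e^{A}$ stays bounded along the positive axis. Because $F_{yy}\neq0$ and $F_x\neq0$, the implicit function theorem in its singular form gives
\[
R(x)=1-b\sqrt{1-x/\rho}+c(1-x/\rho)+O\bigl((1-x/\rho)^{3/2}\bigr)
\]
for some $b>0$. To check aperiodicity, that $\rho$ is the unique dominant singularity on $|x|=\rho$, we use that $t_n>0$ for every $n$ and that the coefficients of $xe^{R}$ are positive.

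\textbf{Unrooted trees.} Otter's dissimilarity theorem says that for every tree, the number of vertex orbits minus the number of edge orbits, plus $1$ if some edge is swapped by an automorphism, equals $1$. Summing this identity over all trees gives
\[
U(x)=R(x)-\tfrac12\bigl(R(x)^2-R(x^2)\bigr),
\]
where $U$ counts unlabelled trees. Substituting the expansion of $R$, the constant terms combine, and the coefficient of $\sqrt{1-x/\rho}$ equals $-b+bR(\rho)=0$ because $R(\rho)=1$. This is the key cancellation: the first singular term of $U$ is therefore
\[
U(x)\sim(\text{analytic})+d\,(1-x/\rho)^{3/2}.
\]
To get it we expand $R$ to order $3/2$, and we note that $R(x^2)$ is analytic near $\rho$ since $\rho^2<\rho$.

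\textbf{Transfer.} The Flajolet–Odlyzko transfer theorem, valid on a $\Delta$-domain by aperiodicity, turns the $(1-x/\rho)^{3/2}$ term into
\[
[x^n]U\sim \frac{d}{\Gamma(-3/2)}\,n^{-5/2}\rho^{-n},
\]
which is the claimed form with $\alpha=1/\rho$ and $C=d/\Gamma(-3/2)$. The numerical values of $\alpha$ and $C$ come from iterating the functional equation for $R$.

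The main obstacle is this last step taken together with the cancellation. Rigorously, we need the expansion of $R$ to order $3/2$ and the analytic continuation of $R$ to a $\Delta$-domain. For the latter we would first try the standard argument that $|xe^{R(x)+A(x)}|<1$ on $|x|=\rho$ away from $x=\rho$, which follows from the triangle inequality and the strict positivity of the coefficients.
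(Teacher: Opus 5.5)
The paper does not prove this statement; it is quoted from Otter and used only as background and for the sharpness remark. (The proof of \Cref{lem:enumeration} uses only the rooted growth rate $a_k^{1/k}\to\alpha$ from \Cref{thm:Otter-unlabelled}.) So there is no in-paper argument to compare with. Your outline is the classical P\'olya--Otter analytic proof, and the overall route is right. The dissimilarity identity, the cancellation of the $\sqrt{1-x/\rho}$ term, and the $\Delta$-continuation are all fine; for the last, note $xe^{R(x)+A(x)}=R(x)$ and $|R(x)|<R(\rho)=1$ for $|x|=\rho$, $x\neq\rho$. Two steps, however, are not justified as written.

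\textbf{Finiteness of $R(\rho)$.} Your argument ``$y\le xe^{y}e^{A}$ stays bounded'' bounds nothing: it is an equality, not an inequality. A correct argument runs as follows.
\begin{itemize}
\item On $[0,\rho)$ one has $R(x)e^{-R(x)}=xe^{A(x)}$, and the right side is strictly increasing.
\item Meanwhile $ye^{-y}\le e^{-1}$, with equality only at $y=1$. So if $R(x_0)=1$ for some $x_0<\rho$, then $xe^{A(x)}>e^{-1}$ on $(x_0,\rho)$, which is impossible.
\item Hence $R<1$ on $[0,\rho)$, so $R(\rho)\le 1$.
\item Pringsheim's theorem plus the analytic implicit function theorem then force $F_y(\rho,R(\rho))=1-R(\rho)=0$.
\end{itemize}

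\textbf{Nonvanishing of $d$.} This is the more substantive omission. You never check that the coefficient $d$ of $(1-x/\rho)^{3/2}$ in $U$ is nonzero. If it vanished, transfer would give only $o(n^{-5/2}\alpha^n)$, and the claimed asymptotic with $C\approx 0.535>0$ would not follow. The fix is short.
\begin{itemize}
\item Write $R-\tfrac12R^2=\tfrac12-\tfrac12(1-R)^2$ and $1-R=bZ-cZ^2+O(Z^3)$ with $Z=\sqrt{1-x/\rho}$. Since $R(x^2)$ is analytic at $\rho$, this gives $d=bc$.
\item Put $w=1-R$ and compare $(1-w)e^{w}=1-\tfrac12w^2-\tfrac13w^3+O(w^4)$ with $e\,xe^{A(x)}=1-\gamma Z^2+O(Z^4)$, where $\gamma=1+\rho A'(\rho)$. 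The $Z^2$ and $Z^3$ coefficients give $b^2=2\gamma$ and $c=b^2/3$.
\item Thus $d=b^3/3>0$. Since $\Gamma(-3/2)=4\sqrt{\pi}/3$, this yields $C=b^3/(4\sqrt{\pi})$.
\end{itemize}
As a consistency check, this matches the rooted constant $C'=b/(2\sqrt{\pi})\approx 0.44$, i.e.\ $b\approx 1.559$, which gives $C\approx 0.535$.
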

The constant $\alpha$ is often referred to as \emph{Otter's tree constant}.

Recently, Lee~\cite{Lee} initiated the study of estimating $|T^u(G)|$ for connected $d$-regular graphs. He obtained a universal lower bound for this graph family:
\begin{theorem}[{Lee~\cite[Theorem 1.4]{Lee}}]\label{thm:Lee}
    There exists a universal constant $d_0\in \mathbb N$ such that the following holds: Let $G$ be a connected $d$-regular graph on $n$ vertices with $d \geq d_0$, then $|T^u(G)|\geq e^{n/2000}$.
\end{theorem}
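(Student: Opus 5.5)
The plan is a double-counting argument. I would compare a lower bound on the number $|T(G)|$ of labelled spanning trees of $G$ with an upper bound on how many labelled spanning trees of $G$ can be isomorphic to one fixed tree. For the lower bound I would use the known estimate that a connected $d$-regular graph on $n$ vertices has
\[
|T(G)|\ge \bigl(d(1-\eps_d)\bigr)^n, \qquad \eps_d\to 0 \text{ as } d\to\infty
\]
(Alon; Kostochka). This is why $d\ge d_0$ is assumed: it makes the loss $(1-\eps_d)^n$ as small as we like.

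For the upper bound, fix an unlabelled tree $T$ and root it at some vertex. Every copy of $T$ in $G$ is obtained by choosing the image of the root (at most $n$ ways) and then, in breadth-first order, choosing for each vertex $v$ the \emph{set} of images of its $c_v$ children among the at most $d$ neighbours of the image of $v$. A copy of $T$ is a subgraph, so the children may be chosen as an unordered set; ordering them would only overcount. This gives
\[
\#\{\text{copies of } T \text{ in } G\}\le n\prod_v \binom{d}{c_v}\le n\, d^{\,n-1}\prod_v \frac{1}{c_v!},
\]
using $\sum_v c_v=n-1$. The product $\prod_v c_v!$ is exactly the saving we need: the crude bound $n d^{n-1}$ per class would only give $|T^u(G)|\ge (1-\eps_d)^n/n$, which is useless.

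The main step is to show that a non-negligible portion of labelled spanning trees of $G$ have $\prod_v c_v!\ge e^{cn}$ for some absolute $c>0$; it suffices, for instance, that at least $n/10$ vertices have at least two children. Let $\mathcal S\subseteq T(G)$ be the set of such trees. Then
\[
|T^u(G)|\ \ge\ \frac{|\mathcal S|}{n d^{\,n-1}e^{-cn}}.
\]
If $|\mathcal S|\ge \tfrac12|T(G)|$, this is at least $(1-\eps_d)^n e^{cn}/(2n)$, which is at least $e^{n/2000}$ once $d_0$ is large. I expect proving $|\mathcal S|$ large to be the main obstacle.

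My first attempt would go through the complement. Trees in which fewer than $n/10$ vertices branch are close to a union of few long paths, and I would try to show there are at most $\bigl((1-\delta)d\bigr)^n$ of them for some absolute $\delta>0$. The idea is to encode such a tree by a walk-like sequence in $G$ plus a small amount of branching data. This fails if the crude count of paths, about $d^n$, is not beaten. In that case I would instead use a uniform spanning tree. Via Kirchhoff and transfer-current bounds in a $d$-regular graph, each vertex has degree at least $3$ in the uniform spanning tree with probability bounded away from $0$, uniformly in $d$ (for large $d$ the degree is roughly $1+\mathrm{Poisson}(1)$). I would then upgrade this to a linear count with high probability using negative association of the edge indicators of the uniform spanning tree.

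Finally, I would choose the constants so that $c$ comfortably absorbs the losses $\eps_d$, the factor $n$, and the factor $2$, which yields the stated exponent $1/2000$.
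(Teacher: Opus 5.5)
The paper gives no proof of this theorem (it is quoted from Lee), so your argument has to stand on its own. The double-counting frame and the Alon--Kostochka bound $|T(G)|\ge (d(1-\varepsilon_d))^n$ are fine. The gap is in the per-class upper bound, which is where all of your exponential saving comes from.

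Choosing the images of the children of $v$ as an \emph{unordered} set $A_v$ does not determine the copy. To continue the breadth-first procedure you must still decide which child of $v$ (hence which subtree of $T$) sits at which vertex of $A_v$. When the child subtrees are pairwise non-isomorphic, all $c_v!$ assignments give different subgraphs, so ordering is not an overcount. Your enumeration never uses that $T$ is spanning, and it already fails for the $7$-vertex spider with legs of lengths $1,2,3$ inside $K_m$. This tree has no nontrivial automorphism, so it has $(m)_7=m(m-1)\cdots(m-6)\sim m^7$ copies, while your bound rooted at the centre is $m\binom{m-1}{3}(m-1)^3\sim m^7/6$. What the enumeration actually gives is at most $nd^{n-1}$ embeddings. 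Each copy corresponds to exactly $|\mathrm{Aut}(T)|$ embeddings, so $\#\{\text{copies of } T\}\le nd^{n-1}/|\mathrm{Aut}(T)|$. The root-fixing automorphisms number $\prod_v\prod_j m_{v,j}!$, where the $m_{v,j}$ are the multiplicities of the isomorphism types among the child subtrees of $v$. This equals $\prod_v c_v!$ only when all siblings are isomorphic.

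Consequently $\mathcal S$ is defined by the wrong property, because branching does not create symmetry. A tree whose vertices all have pairwise non-isomorphic child subtrees has no nontrivial root-fixing automorphism, hence at most $n$ automorphisms. For example, the comb (a path with a pendant leaf at every vertex) has about $n/2$ vertices with two children but only two automorphisms. So even a full proof that half of all spanning trees have $n/10$ branching vertices, which you also leave open, would give no gain over the crude bound.

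A repair needs a property that forces automorphisms. For instance, ask that at least $\delta n$ vertices are each adjacent to two leaves of $T$. Swapping such pairs gives $|\mathrm{Aut}(T)|\ge 2^{\delta n}$, and then $|T^u(G)|\ge |\mathcal S|\,2^{\delta n}/(nd^{n-1})$. The real content of the theorem is then to show that a non-negligible fraction of the labelled spanning trees of an \emph{arbitrary} connected $d$-regular graph have this property. Since the relevant count is at most $n$, a lower bound $\gamma n$ on its expectation under the uniform spanning tree already gives a $\gamma/2$ fraction, so no concentration is needed. (Negative association would not give concentration directly anyway, because events at adjacent vertices share an edge.) However, the $1+\mathrm{Poisson}(1)$ local picture you invoke is only a heuristic. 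Proving such a local statistic uniformly over all connected $d$-regular graphs, e.g.\ through transfer-current estimates or a leaf-switching argument, is exactly the missing core of the proof.
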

Resolving a conjecture by Lee, Bitonti, Michel and Scott~\cite{Bitonti-Michel-Scott} obtained a polynomial lower bound on $|T^u(G)|$ for connected graphs with minimum degree $d$.

In this paper, we are interested in a different direction: Instead of proving a universal lower bound, what additional conditions on $G$ (other than being $d$-regular) guarantee that $|T^u(G)|$ is almost as large as $|T^u(K_n)|$? 
The pseudorandom graph model we work with is the class of spectral expanders, i.e.\ $(n,d,\lambda)$-graphs. These are $d$-regular graphs on $n$ vertices whose adjacency matrix eigenvalues $d = \lambda_1\geq \lambda_2\geq \cdots \geq \lambda_n$ satisfy $\max\{\lambda_2, |\lambda_n|\}\leq \lambda$. For a detailed introduction on $(n,d,\lambda)$-graphs, refer to the survey by Krivelevich and Sudakov~\cite{survey}.

Here is our main result:
\begin{theorem}\label{thm:main}
    Let $\alpha\approx 2.956$ be Otter's tree constant.
    For every $0<\eps <\alpha$, there exist constants $d_0 = d_0(\eps)$, $C= C(\eps)$ such that the following holds: Let $G$ be an $(n,d,\lambda)$-graph with $d\geq d_0$ and $d/\lambda \geq C$, then $|T^u(G)|\geq (\alpha -\eps)^n$.
\end{theorem}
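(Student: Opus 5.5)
The plan is to find, among the roughly $\alpha^n$ unlabelled trees on $n$ vertices, a subfamily of size $(\alpha-\eps)^n$ whose members all embed as spanning trees of $G$. The natural subfamily is the trees of bounded maximum degree. Fix a large constant $\Delta=\Delta(\eps)$. I will first show, using \cref{thm:Otter} together with the standard generating-function analysis behind it, that the number of unlabelled trees on $n$ vertices with maximum degree at most $\Delta$ is at least $(\alpha-\eps/2)^n$ for $n$ large. The counting series $T_\Delta(x)$ of rooted trees with bounded out-degree satisfies a P\'olya-type functional equation. Its radius of convergence $\rho_\Delta$ decreases to Otter's $\rho=1/\alpha$ as $\Delta\to\infty$, by monotone convergence of the coefficients and continuity of the dominant singularity. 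Hence the growth rate $1/\rho_\Delta$ tends to $\alpha$. Alternatively, I can avoid analytic combinatorics with a direct argument: a random unlabelled tree has only $o(n)$ vertices of degree above $\Delta$ in a way that loses at most a factor $(1+\eps')^n$. This follows because trees with many high-degree vertices can be mapped to bounded-degree trees with small multiplicity.

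Second, I need a universality statement: every $(n,d,\lambda)$-graph with $d\ge d_0$ and $d/\lambda\ge C(\Delta)$ contains every spanning tree of maximum degree at most $\Delta$. Full spanning universality of this kind is delicate. It is known, for example through the Han–Yang or Montgomery-type results on spanning trees in expanders, but I may want to avoid relying on it. A more robust route restricts further to a still-rich subclass. I take trees $T$ with at least $cn$ leaves, or better, trees containing many bare paths. Almost all trees have linearly many leaves, and imposing a structure such as "$T$ contains $\delta n$ leaves attached to distinct parents" costs only a subexponential-in-$\eps$ factor, so the count stays at least $(\alpha-\eps)^n$.

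The embedding then works as follows. Remove the leaves to get $T'$ with at most $(1-\delta)n$ vertices and maximum degree at most $\Delta$. Embed $T'$ greedily into $G$ using the Friedman–Pippenger extendability method, which applies because $(n,d,\lambda)$-graphs with $d/\lambda$ large are expanding on sets of size up to a constant fraction of $n$. Finally, attach the leaves by finding a perfect matching between the parents and the unused vertices via Hall's theorem, again using the expander mixing lemma. Leaving a random-like reservoir of unused vertices, chosen in advance, ensures Hall's condition holds for small and large sets alike.

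The main obstacle is the embedding step, specifically making it genuinely spanning. Greedy extendability only covers an $(1-\delta)$-fraction of the vertices, and the final matching needs both that the parents of leaves are well spread and the reservoir's neighbourhood properties. To handle this, I would set aside a random reservoir $R$ of size about $\delta n$ before embedding $T'$. This guarantees that every vertex has about $\delta d$ neighbours in $R$, and that the mixing lemma controls $e(X,Y)$ for $X\subseteq V\setminus R$ and $Y\subseteq R$. Hall's condition then reduces to standard small-set and large-set expansion estimates. The counting step is comparatively routine once $\Delta$ and $\delta$ are chosen small relative to $\eps$.
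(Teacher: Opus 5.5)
Your overall strategy is the right one: count a rich family of bounded-degree trees with linearly many leaves, then embed each of them spanningly. But the embedding step, which is where all the difficulty lies, has a genuine gap.

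First, your claim that spanning universality for all trees of maximum degree $\Delta$ is known when $d/\lambda\ge C(\Delta)$ is false. That is the open conjecture of Alon, Krivelevich and Sudakov; the best known condition is $d/\lambda=\Omega_\Delta(\log^3 n)$. Your substitute argument for trees with many leaves does not work as written:
\begin{itemize}
\item[(i)] The theorem allows $d$ to be a fixed constant (e.g.\ Ramanujan graphs with $d\ge 4C^2$ and $n\to\infty$). A random set $R$ of density $\delta$ then does not give every vertex about $\delta d$ neighbours in $R$. Each vertex has $\mathrm{Bin}(d,\delta)$ neighbours there, so about $(1-\delta)^d n$ vertices have none; concentration at every vertex needs $d\gg\log n$.
\item[(ii)] Friedman--Pippenger/Haxell-type extendability always needs a buffer of unused vertices. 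So it cannot embed $T'$, which has $(1-\delta)n$ vertices, spanningly into $G-R$ with $|R|=\delta n$. If the embedding instead leaves other vertices uncovered or uses vertices of $R$, the leftover set $U$ that the leaves must cover is not $R$ and inherits none of its properties.
\item[(iii)] Even for a good $U$, Hall's condition for small sets does not follow from the expander mixing lemma. That lemma only forces $e(X,Y)>0$ when $|X||Y|\gtrsim(\lambda n/d)^2$. A single vertex of $U$ may have no neighbour among the images of the parents, and those images are dictated by the greedy embedding.
\end{itemize}
Making the parents' images well distributed with respect to every leftover vertex is exactly the non-trivial content of Pavez-Sign\'{e}'s theorem (\Cref{thm:ndlambda-universal}). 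The paper uses it as a black box: every $(n,d,\lambda)$-graph with $d/\lambda\ge C$ contains every tree with maximum degree at most $\Delta$ and at least $K_0\lambda n/d$ leaves.

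Once that theorem is cited, only counting remains, and there your argument is only asserted. Neither the continuity of the dominant singularity of the bounded-degree P\'olya series nor the claim that requiring $\delta n$ leaves with distinct parents costs little is proved. The paper avoids all analysis with an explicit construction. Pick $K$ with $a_K^{1/K}>\alpha-\eps/3$, where $a_K$ is the number of P\'olya trees on $K$ vertices. Take $L=\lfloor n/K\rfloor-6$ arbitrary rooted $K$-vertex trees and join consecutive roots into a path. Hang pendant paths of distinct lengths $2K+1$ and $n-LK-2K-1$, both exceeding $K$, at the two ends; this makes $(T_1,\dots,T_L)\mapsto T$ injective. The result is at least $a_K^L>(\alpha-\eps)^n$ trees of maximum degree at most $K+1$, each with at least $n/(3K)$ leaves (a non-root leaf from each block, with distinct parents). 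Taking $C$ large enough that $K_0\lambda/d\le K_0/C\le 1/(3K)$ in \Cref{thm:ndlambda-universal} then finishes the proof.
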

Note that some spectral-gap assumption is necessary. Indeed, a disconnected $d$-regular graph has $\lambda = d$ and hence $d/\lambda = 1$ but has no spanning tree.
Also, since $G$ is a subgraph of $K_n$, the lower bound is sharp up to the $\eps$ in the exponential base.

\section{Proof}
Throughout the paper, $n$ is an asymptotic parameter and we always assume $n$ is sufficiently large. We write $\mathcal T_n$ for the family of trees on $n$ vertices.
Define 
\[
    \mathcal T(\Delta,\rho,n) =\{T\in \mathcal T_n: \Delta(T)\leq \Delta, T \text{ has at least $\rho n$ leaves}\}\,.
\]
The proof consists of two steps: First, we show that for any $0<\eps<\alpha$, there exist $\Delta$ and $\rho$ such that $|\mathcal T(\Delta, \rho,n)|\geq (\alpha -\eps)^n$. Then, we invoke the following tree universality result for pseudorandom graphs:
\begin{theorem}[{Pavez-Signé~\cite[Theorem 4.2]{Pavez-Signe}}]\label{thm:ndlambda-universal} For all $\Delta \in \mathbb N$, there exist positive constants $C$ and $K$ such that the following holds for all sufficiently large $d\in \mathbb N$. If $G$ is an $(n,d,\lambda)$-graph with $d/\lambda\geq C$, then $G$ contains a copy of every tree $T\in \mathcal T(\Delta, K\lambda/d, n)$. 
\end{theorem}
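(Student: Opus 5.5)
Although \cref{thm:ndlambda-universal} is quoted from Pavez-Signé, here is how I would prove it directly, following the standard ``remove leaves, embed the core, reattach leaves'' scheme. Fix $\Delta$ and $T\in\mathcal T(\Delta,K\lambda/d,n)$. Let $L$ be a set of $\ell:=\lceil K\lambda n/d\rceil$ leaves of $T$. Let $P$ be the set of parents of $L$; each parent has at most $\Delta$ children in $L$, so $|P|\ge \ell/\Delta$. Put $T'=T-L$, a tree on $n-\ell$ vertices with $\Delta(T')\le\Delta$. The proof then has two stages: embed $T'$ into $G$ leaving exactly $\ell$ vertices free, and then attach the leaves. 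The latter is a $\Delta$-star-matching (Hall-type) problem between $\varphi(P)$ and the free set $R=V(G)\setminus\varphi(V(T'))$.

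For the first stage I would use the Friedman--Pippenger/Haxell embedding theorem. It says that if every $X\subseteq V(G)$ with $|X|\le 2m$ satisfies $|N(X)|\ge (\Delta+1)|X|$, and there is enough room (roughly $|V(G)|\ge |T'|+\Theta(\Delta m)$), then every tree with maximum degree $\Delta$ and at most $|V(G)|-\Theta(\Delta m)$ vertices embeds. The expansion comes from the expander mixing lemma. For $|X|\le \lambda n/d$ we get $|N(X)|\ge \frac{d^2|X|}{\lambda^2+d^2|X|/n}$, which is $\gg \Delta|X|$ once $C=d/\lambda$ is large. Moreover, any two sets of size at least $\lambda n/d$ span an edge. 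So we may take $m=\Theta(\lambda n/d)$, and the required room $\Theta(\Delta\lambda n/d)$ is at most $\ell$ once $K$ is large compared with $\Delta$. This is exactly why the number of leaves must be $\Omega(\lambda n/d)$.

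The second stage is the main obstacle. By Hall's theorem for star matchings, it suffices that every $S\subseteq \varphi(P)$ satisfies $|N(S)\cap R|\ge \sum_{p\in S}\deg_L(\varphi^{-1}(p))$, together with the symmetric condition for subsets $Y\subseteq R$. For large sets both conditions follow from the mixing lemma, since two sets of size at least $\lambda n/d$ have an edge. The trouble comes from small sets: a few vertices of $R$, or of $\varphi(P)$, might have almost all their neighbours on the wrong side, because the embedding $\varphi$ is adversarial.

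My first attempt would be to make the embedding ``extendable'' in the sense of Glebov--Johannsen--Krivelevich. I would maintain throughout that the current partial image is $(\Delta,m)$-good, so that small sets in the remaining graph still expand into the free vertices. In addition, I would embed $T'$ in an order that places the vertices of $P$ last. Extendability then lets me move an image within a free neighbourhood, and I would use this to repair the bounded number of violating small sets by swapping. Alternatively, one can fix a random reservoir $R_0$ of size $\ell$ at the start, embed $T'$ inside $V(G)\setminus R_0$, and use degree concentration into $R_0$ to verify Hall's condition for small sets. Large sets are handled by the mixing lemma, and the ``bad'' vertices with low degree into the relevant side are handled using the fact that, by mixing again, there are fewer than $\lambda n/d$ of them; these are absorbed by re-embedding their preimages before the matching step.
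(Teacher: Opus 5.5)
The paper does not prove this theorem; it imports it from Pavez-Sign\'{e}, so your sketch has to stand on its own. Your first stage is sound. Note that it is Haxell's theorem, not Friedman--Pippenger, that handles near-spanning trees: it requires $|N(X)|\ge\Delta|X|+|T'|$ for $m<|X|\le 2m$. With $m=\lambda n/d$, Tanner's bound and the mixing lemma, it applies once $\ell\ge(2\Delta+1)\lambda n/d$.

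The genuine gap is the second stage. It carries the entire content of the theorem, and you do not carry it out. Because $|R|=\sum_p\deg_L(\varphi^{-1}(p))$ exactly, Hall's condition for $S=\varphi(P)\setminus N(Y)$ forces $\sum_{p\in N(Y)\cap\varphi(P)}\deg_L(\varphi^{-1}(p))\ge|Y|$ for every $Y\subseteq R$. In particular, every free vertex must have a neighbour in $\varphi(P)$, a set of size only $\Theta(\lambda n/d)$. Neither Haxell's theorem nor extendability controls this. Extendability makes small sets of parent images expand into the free set, which is the easy side of Hall. But an adversarial embedding can leave a free vertex all of whose $d$ neighbours are images of non-parents. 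The mixing lemma only bounds the number of such poorly connected vertices by $O(\Delta\lambda n/(Kd))$, which is neither zero nor bounded.

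Neither proposed repair closes this.
\begin{itemize}
\item ``Swapping'' a bad free vertex $r$ into the image means re-embedding some $x\in V(T')$ at $r$. This needs $r$ adjacent to all of $\varphi(N_{T'}(x))$, and it needs the vacated vertex $\varphi(x)$ to be well connected to $\varphi(P)$. You give no reason such an $x$ exists, and the same circularity recurs for the vacated vertex.
\item The reservoir variant fails more basically. If $|R_0|=\ell$, then $T'$ must be embedded \emph{spanningly} in $G-R_0$ with zero room, so Haxell's theorem and extendability do not apply. If $|R_0|<\ell$, the rest of the free set is again adversarial.
\item Moreover, the randomness of $R_0$ controls only degrees \emph{into} $R_0$. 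The failing direction is adjacency of free vertices to $\varphi(P)$, and $\varphi(P)$ is decided after $R_0$. Bad vertices of $R_0$ also have no preimages to re-embed.
\end{itemize}
What is missing is a mechanism that forces the final free set to be dominated, in Hall's sense, by the parent images. One option is to pre-choose a set the parents must occupy and first cover the few vertices poorly connected to it, which needs a way to route the tree onto prescribed vertices. Another is to keep the parents' positions free until the end and solve the resulting two-level covering problem. That is the substance of Pavez-Sign\'{e}'s theorem, and the proposal does not supply it.
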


For technical reasons, we will also need Otter's corresponding estimate for rooted unlabelled trees, also known as Pólya trees. Given two Pólya trees $T$ and $T'$, rooted at $r$ and $r'$ respectively, we say they are isomorphic if there exists a bijection $\sigma: V(T)\rightarrow V(T')$ such that $\sigma(r) = r'$ and that $\{u,v\}\in E(T)$ if and only if $\{\sigma(u),\sigma(v)\}\in E(T')$. Two Pólya trees are distinct if they are not isomorphic. 

\begin{theorem}[Otter~\cite{Otter}]\label{thm:Otter-unlabelled}
    There exists absolute constant $C' \approx 0.44$ such that the number of distinct Pólya trees in $K_n$ is $(1+o(1))C'n^{-3/2} \alpha^n$, where $\alpha\approx 2.956$ is Otter's tree constant.
\end{theorem}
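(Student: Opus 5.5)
The plan is the classical analytic-combinatorics argument via Pólya's functional equation, the square-root singularity it forces, and a transfer theorem. Let $r_n$ be the number of distinct Pólya trees on $n$ vertices and set $A(x)=\sum_{n\ge1} r_n x^n$. A Pólya tree is a root together with a multiset of Pólya trees hanging from it. The multiset construction then gives
\[
A(x)=x\prod_{k\ge1}(1-x^k)^{-r_k}=x\exp\Big(\sum_{k\ge1}\frac{A(x^k)}{k}\Big).
\]
Next I bound the radius of convergence $\rho$ of $A$. For the upper bound on $r_n$, every Pólya tree arises from some plane rooted tree, so $r_n$ is at most a Catalan number, hence $r_n\le 4^n$ and $\rho\ge 1/4$. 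For the lower bound, $r_n$ grows exponentially, for instance by using caterpillars or binary-type constructions. Together these give $1/4\le\rho<1$.

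The key step isolates the "$k=1$" part of the functional equation. Write $A(x)=x\,e^{A(x)}e^{B(x)}$ with $B(x)=\sum_{k\ge2}A(x^k)/k$. Since $A(x^k)$ converges for $|x|<\rho^{1/k}$, the function $B$ is analytic in $|x|<\sqrt{\rho}$, a disc strictly larger than $|x|\le\rho$. Thus $A$ is defined implicitly by $F(x,w)=0$, where $F(x,w)=w-x e^{w}e^{B(x)}$, and $F$ is analytic in a neighbourhood of $\{|x|\le\rho\}\times\mathbb C$.

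By Pringsheim's theorem, $\rho$ is a singularity of $A$. I then show $\tau:=A(\rho)$ is finite and satisfies $F_w(\rho,\tau)=0$. Indeed, on $(0,\rho)$ we have $A=xe^Ae^B$, so $A e^{-A}=xe^{B(x)}$ is increasing. Since $we^{-w}\le e^{-1}$, the limit $\tau$ is finite. If $F_w(\rho,\tau)\ne0$, the implicit function theorem would continue $A$ analytically past $\rho$, a contradiction. The condition $F_w=0$ reads $1=xe^we^B=w$, so $\tau=1$.

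To get the square-root expansion, I check $F_{ww}(\rho,1)=-1\ne0$ and $F_x(\rho,1)=-e(1/\rho+B'(\rho))\cdot\rho\ne0$ (up to sign; it is nonzero because all terms are positive). The Weierstrass preparation theorem then gives
\[
A(x)=1-b\sqrt{1-x/\rho}+O(1-x/\rho), \qquad b=\sqrt{2\rho F_x/F_{ww}}>0.
\]
Next I need $\rho$ to be the unique singularity on $|x|=\rho$ and a $\Delta$-analytic continuation. Since $r_n>0$ for all $n\ge1$, $A$ is aperiodic. Hence for $|x|=\rho$ with $x\ne\rho$, the triangle inequality is strict: $|xe^{B(x)}|<\rho e^{B(\rho)}=e^{-1}$, so $|A(x)|$ stays away from the branch value and $F_w(x,A(x))\ne0$. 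The implicit function theorem then continues $A$ locally at every such boundary point. By compactness, $A$ extends to a $\Delta$-domain at $\rho$.

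Finally, the Flajolet–Odlyzko transfer theorem applied to $-b(1-x/\rho)^{1/2}$ yields
\[
r_n=(1+o(1))\frac{b}{2\sqrt\pi}\,n^{-3/2}\rho^{-n}.
\]
This gives the statement with $\alpha=1/\rho\approx2.956$ and $C'=b/(2\sqrt\pi)\approx0.44$; the numerical values come from computing $\rho$ from $\rho e^{1+B(\rho)}=1$.

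I expect the main obstacle to be the rigorous justification that $A(\rho)=1$ and the dominance and $\Delta$-continuation argument on the circle $|x|=\rho$. The rest is mechanical once the singular behaviour at $\rho$ is established.
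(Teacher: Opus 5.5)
Your argument is correct. It is the classical Pólya--Otter analytic proof in its modern singularity-analysis form: the functional equation, isolating the $k=1$ term so that $A$ solves $w=xe^{w}e^{B(x)}$ with $B$ analytic on $|x|<\sqrt{\rho}$, the branch point forced at $A(\rho)=1$, dominance on $|x|=\rho$ by aperiodicity, and a transfer theorem (in place of Otter's use of Darboux's method).

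The paper gives no proof of this statement; it imports it from Otter. It only uses the weak consequence that $a_K^{1/K}>\alpha-\eps/3$ for some $K\ge 2$, in the proof of \Cref{lem:enumeration}. For that, the first stage of your argument already suffices. Cauchy--Hadamard gives $\limsup_k r_k^{1/k}=1/\rho$. The trivial bounds $|\mathcal T^u(K_n)|\le r_n\le n\,|\mathcal T^u(K_n)|$ then identify $1/\rho$ with the constant $\alpha$ of \Cref{thm:Otter}. Your singular expansion, $\Delta$-continuation and transfer step are what buy the precise factor $n^{-3/2}$ and the constant $C'$.

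Two local slips need fixing.

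First, $F_x(\rho,1)=-e\,e^{B(\rho)}\bigl(1+\rho B'(\rho)\bigr)=-\bigl(1/\rho+B'(\rho)\bigr)$, using $\rho e^{1+B(\rho)}=1$; you dropped the factor $e^{B(\rho)}$. Your formula $b=\sqrt{2\rho F_x/F_{ww}}$ is right. With the correct $F_x$ it gives $b=\sqrt{2(1+\rho B'(\rho))}\approx 1.56$, and hence $C'=b/(2\sqrt{\pi})\approx 0.440$. With your expression for $F_x$ you would get about $0.42$ instead.

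Second, the finiteness of $\tau$ does not follow from the bound $we^{-w}\le e^{-1}$ as written. The point is that $A$ increases continuously from $0$, while $Ae^{-A}=xe^{B(x)}$ is strictly increasing. So $A$ can never enter $(1,\infty)$, where $we^{-w}$ is decreasing. Hence $A\le 1$ on $(0,\rho)$ and $\tau\le 1$, and then $F_w(\rho,\tau)=0$ forces $\tau=1$ exactly as you say.
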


Here is the enumeration result in the first step:
\begin{lemma}\label{lem:enumeration}
    Fix $0<\epsilon<\alpha$. There exists $\Delta = \Delta(\eps)$, $\rho = \rho(\eps)$ such that  $|\mathcal T(\Delta, \rho, n)|\geq (\alpha-\eps)^n$, where $\alpha$ is Otter's tree constant.
\end{lemma}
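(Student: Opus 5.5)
Here $\mathcal T(\Delta,\rho,n)$ should be read as counting trees up to isomorphism, since that is what is needed downstream. The plan is to build many unlabelled trees of bounded degree and with linearly many leaves by a gluing construction. The pieces are Pólya trees of a fixed size $m$, counted by \cref{thm:Otter-unlabelled}; bounded degree and many leaves are forced by restricting the pieces and attaching pendant leaves. Choose $m$ large depending on $\eps$. Let $\mathcal P_m$ be the set of distinct Pólya trees on $m$ vertices whose maximum degree is at most $\Delta_0$. First I will show that for suitable $\Delta_0=\Delta_0(m)$ we have $|\mathcal P_m|\geq (\alpha-\eps/2)^m$. In fact one can simply take $\Delta_0=m$: every tree on $m$ vertices has maximum degree less than $m$. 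So by \cref{thm:Otter-unlabelled}, $|\mathcal P_m|\geq \tfrac{C'}{2}m^{-3/2}\alpha^m\geq(\alpha-\eps/2)^m$ once $m$ is large. No degree restriction is needed inside the pieces; boundedness of degree will come from $m$ being a constant.

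\textbf{Construction.} Let $k=\lfloor n/(m+2)\rfloor$, and build a path $v_1,\dots,v_k$ (the spine). For each $i$, attach to $v_i$ one pendant leaf, and attach the root of a Pólya tree $P_i\in\mathcal P_m$ via an edge from $v_i$. Put any leftover $r<m+2$ vertices as a pendant path at $v_1$, so that the total is $n$ vertices. The resulting tree has maximum degree at most $m+3$, so we take $\Delta=m+3$. It has at least $k$ leaves, since the pendant leaves alone number $k$; hence $\rho=1/(2(m+2))$ works for large $n$. The number of sequences $(P_1,\dots,P_k)$ is $|\mathcal P_m|^k\geq(\alpha-\eps/2)^{mk}$.

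\textbf{Injectivity.} This is the main obstacle: I must show that not too many sequences give isomorphic trees. My plan is to make the spine canonically recoverable, for instance by making the spine vertices exactly those vertices adjacent to a leaf and lying on the unique longest path. Since the pieces $P_i$ can themselves have leaves and long paths, it is cleaner to use a weaker argument and bound the multiplicity directly. Given the unlabelled tree $T$, the sequence is determined by the following data:
\begin{itemize}
\item the choice of the spine, as an ordered path of $k$ vertices;
\item for each $v_i$, which neighbour is the attached root.
\end{itemize}
Given these, the $P_i$ are determined up to rooted isomorphism. However, the number of paths in $T$ can be large. So instead I fix a labelling of $T$ and count the sequences mapping to it, each being an embedding datum. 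A simpler route is to note that $|\mathrm{Aut}|$ issues only matter via preimage counts. The number of preimages is at most the number of ways to choose the spine, which is at most $n^2$ (a path is determined by its endpoints) up to the ordering factor $2$, times $\Delta^k$ choices of attachment neighbours. This gives
\[
|\mathcal T(\Delta,\rho,n)|\geq \frac{(\alpha-\eps/2)^{mk}}{2n^2\Delta^k}.
\]
Since $mk\geq n-3n/m$ roughly, this is at least $(\alpha-\eps/2)^{n(1-3/m)}(m+3)^{-n/m}n^{-2}$. Because $(m+3)^{1/m}\to1$, for $m$ large this is at least $(\alpha-\eps)^n$.

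The main thing to double-check is the preimage bound: that choosing the spine endpoints and the neighbours at each spine vertex really recovers the sequence. Indeed, once the spine is removed together with the chosen pendant leaves, the components hanging off each $v_i$ are the $P_i$ rooted at the chosen neighbour. If the pendant leaf and root choice are ambiguous, that is absorbed into the $\Delta^k$ factor. The leftover pendant path at $v_1$ is handled by letting the choice at $v_1$ be among $\Delta$ options as well.
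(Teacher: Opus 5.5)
Your argument is correct, but the key counting step works differently from the paper's. The paper makes the map $(T_1,\dots,T_L)\mapsto T$ genuinely injective through a rigid construction. The roots of the pieces themselves form the spine, and two pendant paths of distinct lengths $2K+1$ and $R-2K-1$ are attached at its ends. Both are longer than any maximal pendant path that can arise inside a $K$-vertex piece, so the spine and its orientation can be read off canonically from the unlabelled tree. The count is therefore exactly $a_K^L$, and leaves come for free from the non-root leaves of the pieces.

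You avoid any canonical identification and instead bound the multiplicity of the map. Every preimage of a fixed tree is determined by two pieces of data. The first is an ordered spine, with at most $n^2$ choices since a path in a tree is fixed by its ordered endpoints. The second is one neighbour of each spine vertex marking the root of $P_i$, with at most $\Delta^k$ choices. This works because $P_i$ is recovered as the component of $T$ minus the spine that contains the marked neighbour. The price is a loss of $(m+3)^k\le (m+3)^{n/(m+2)}$, which is exponential in $n$ but has base tending to $1$, so it is absorbed by taking $m$ large. Your explicit pendant leaf at each spine vertex replaces the paper's leaf argument.

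Your route is more robust. It applies to any gluing in which the pieces are recoverable from a bounded number of local choices, and it needs no check that the distinguishing features cannot occur inside the pieces. The paper's route gives an exact, lossless count and a cleaner final computation.

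One aside in your write-up, that ``the number of paths in $T$ can be large'', is inaccurate and unnecessary. The $n^2$ bound you then use is the right one.
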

\begin{proof}
    We will explicitly construct a family of unlabelled (non-rooted)  trees of the desired size.
     Let $a_k$ denote the number of unlabelled rooted trees on $k$ vertices. From~\Cref{thm:Otter-unlabelled}, we know that $\lim_{k\rightarrow \infty}(a_k)^{1/k} = \alpha$. Let $K\geq 2$ be such that $(a_K)^{1/K}> \alpha -\eps/3$. Define $L = \lfloor n/K\rfloor-6$ and $R = n - LK$. Note that $6K\leq R < 7K$.
    Let $T_1,\dots, T_L$ be arbitrary unlabelled rooted trees on $K$ vertices with roots $x_1,\dots, x_L$ respectively.
    Given $T_1,\dots, T_L$, we construct an unlabelled (non-rooted) tree on $n$ vertices in three steps:
    \begin{enumerate}
        \item Place trees: Place $T_1,\dots, T_L$ on pairwise disjoint vertex sets.
        \item Build the spine: Add edges $\{x_1,x_2\},\dots, \{x_{L-1},x_L\}$.
        \item Add pendant paths: Add a path of length $a = 2K+1$ to $x_1$ and a path of length $b= R - a$ to $x_L$. These two paths are vertex disjoint and the vertices on those two paths, other than $x_1$ and $x_L$, are vertex disjoint from $(\cup_{i=1}^L V(T_i))$. 
    \end{enumerate}
    Let $T$ denote the resulting tree. Note that $T$ is on $a + b + LK = n$ vertices. Below is an illustration of the procedure.
    \begin{figure}[h]
        \centering
        \includegraphics[width=0.7\linewidth]{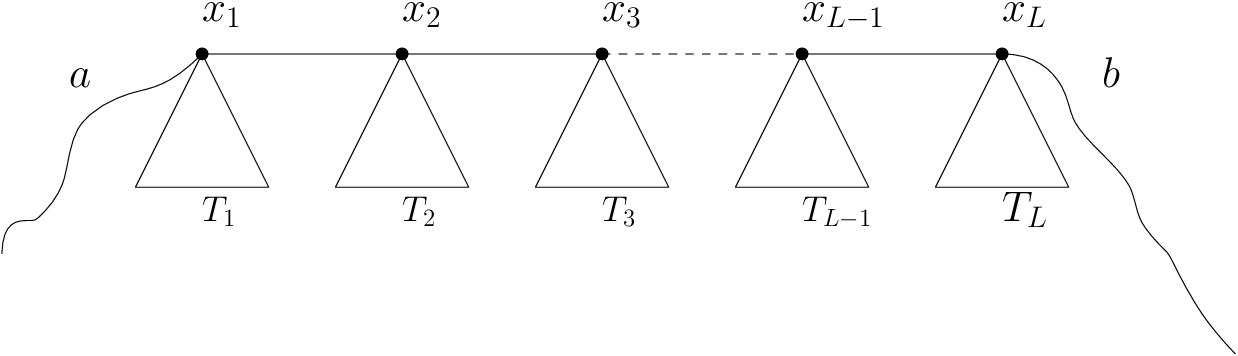}
        \caption{An example tree $T$}
        \label{fig:placeholder}
    \end{figure}
    
    \begin{claim}
        The map $(T_1,\dots, T_L)\mapsto T$ is injective.
    \end{claim}
    \begin{proofclaim}
        First, since $T_i$ has only $K$ vertices, the only maximal pendant paths of length greater than $K$ in $T$ are the two paths added in Step 3. Hence these two paths and their attachment vertices $\{x_1,x_L\}$ are identifiable from $T$.
        Moreover, since $b = R-a \geq 4K-1> a$, the two paths have distinct lengths. From there, we can distinguish $x_1$ from $x_L$. Lastly, the spine is the unique path connecting $x_1$ to $x_L$ in $T$ and so the ordered sequence $x_1,x_2,\dots, x_L$ is identifiable. Consequently, every distinct choice of $(T_1,\dots, T_L)$ gives rise to a distinct tree $T$, which proves the map is injective.
    \end{proofclaim}
    Fix an arbitrary tree $T$ obtained this way. It is easy to see the maximum degree is at most $2 + (K-1) = K+1$. 
    Also, each rooted tree $T_i$ has a non-root leaf, and this vertex remains a leaf in the final tree. Hence the final tree has at least $L = \lfloor n/K\rfloor - 6 \geq n/2K$ leaves. Thus, taking $\Delta = K+1$ and $\rho = 1/(3K)$ is enough. It only remains to check how many $(T_1,\dots, T_L)$ there are. 
    By the choice of $K$, there are at least 
    \[
       (a_K)^L = \bigl(a_K^{1/K}\bigr)^{KL} >(\alpha - \eps/3)^{n - 7K} > (\alpha -\eps)^n 
    \]
    options for $(T_1,\dots, T_L)$, as desired.
\end{proof}

Now we prove~\Cref{thm:main}.
\begin{proof}[Proof of~\Cref{thm:main}]
    Let $\Delta, \rho$ be given by~\Cref{lem:enumeration} and $\mathcal T = \mathcal T(\Delta, \rho, n)$. Note that $|\mathcal T|\geq (\alpha-\eps)^n$. Applying~\Cref{thm:ndlambda-universal}, there exist $K_0,C_0,d_0 >0$ such that a $(n,d,\lambda)$-graph with $d\geq d_0$ and $d/\lambda \geq C_0$ contains a copy of every tree $T\in \mathcal T(\Delta, K_0\lambda/d, n)$. Let $C = \max\{C_0, K_0/\rho\}$. Since $K_0\lambda/d\leq K_0/C\leq \rho$, we know $\mathcal T\subseteq \mathcal T(\Delta, K_0\lambda /d,n)$. Thus,
    we conclude that an $(n,d,\lambda)$-graph with $d/\lambda \geq C$ contains a copy of every $T\in \mathcal T$, and therefore has at least $|\mathcal T|\geq (\alpha-\eps)^n$ distinct unlabelled spanning trees.
\end{proof}

\section{Concluding remarks}
In this paper, we established that for every $0<\eps<\alpha$, there are constants $C$ and $d_0$ such that for every $(n,d,\lambda)$-graph $G$ with $d\geq d_0$ and $d/\lambda\geq C$ has at least $(\alpha-\eps)^n$ distinct unlabelled spanning trees.
The proof crucially uses a spanning trees universality result with many leaves. If one only enforces the bounded degree condition and not the many leaves condition, then currently the best known result is due to Hyde, Morrison, M\"{u}yesser and Pavez-Sign\'{e}~\cite{Hyde-et-al}, who showed $d/\lambda =\Omega_\Delta(\log ^3n)$ guarantees containment of all bounded degree spanning trees. 
Note that this would only give~\Cref{thm:main} under the stronger assumption $d/\lambda = \Omega_\Delta(\log^3n)$.
An old conjecture of Alon, Krivelevich and Sudakov~\cite{Alon-et-al} says that $d/\lambda = \Omega_\Delta(1)$ is sufficient for bounded degree spanning tree universality, which is an intriguing open question.

Another interesting direction to explore is to prove the optimal anti-concentration inequality for spanning trees in $(n,d,\lambda)$-graphs. The question appears in the concluding remarks of~\cite{Lee}, which we reiterate here:
\begin{problem}\label{prob}
    Fix $\eps > 0$.
    Let $G$ be an $(n,d,\lambda)$-graph and $T$ be an arbitrary spanning tree in $G$. Under what conditions on $G$ is it true that $\Pr(T\cong T') \leq e^{-(1-\eps)n}$ where $T'$ is a uniformly random spanning tree in $G$? 
\end{problem}
Results of this type directly imply there are many distinct unlabelled spanning trees, but not the other way around. In particular,~\Cref{thm:main} does not shed light on~\Cref{prob}.

\end{document}